\theoremstyle{plain}
\newtheorem{theorem}{Theorem}[section]
\newtheorem{lemma}[theorem]{Lemma}
\newtheorem{proposition}[theorem]{Proposition}
\newtheorem{definition-lemma}[theorem]{Definition-Lemma}
\newtheorem{defn}[theorem]{Definition}
\newtheorem{notation}[theorem]{Notation}
\def\ideal#1.{I_{#1}}
\def\ring#1.{\mathcal {O}_{#1}}
\def\fring#1.{\hat{\mathcal {O}}_{#1}}
\def\proj#1.{\mathbb P(#1)}
\def\pr #1.{\mathbb P^{#1}}
\def\af #1.{\mathbb A^{#1}}
\def\Hz #1.{\mathbb F_{#1}}
\def\Hbz #1.{\overline{\mathbb F}_{#1}}
\def\pic#1.{\operatorname {Pic}\,(#1)}
\def\pico#1.{\operatorname{Pic}^0(#1)}
\def\picg#1.{\operatorname {Pic}^G(#1)}
\def\ner#1.{NS (#1)}
\def\rdown#1.{\llcorner#1\lrcorner}
\def\rup#1.{\ulcorner#1\urcorner}
\def\cone#1.{\operatorname {NE}(#1)}
\def\ccone#1.{\overline{\operatorname {NE}}(#1)}
\def\coef#1.{\frac{(#1-1)}{#1}}
\def\vit#1.{D_{\langle #1 \rangle}}
\def\mm#1.{\overline {M}_{0,#1}}
\def\H1#1.{H^1(#1,{\ring #1.})}
\def\adj#1.{\frac {#1-1}{#1}}
\def\spn#1.{\overline{#1}}
\def\ses#1.#2.#3.{0\to #1\to #2\to #3 \to 0}
\def\pek#1.#2.{\Cal P^{#1}(#2)}
\def\plk#1.#2.{\Cal P^{\leq #1}(#2)}
\def\ev#1.{\operatorname{ev_{#1}}}
\def\bminv#1.{(\nu_1,s_1;\nu_2,s_2;\dots ;\nu_{#1},s_{#1};\nu_{r+1})}
\def\zinv#1.{(\nu_1,s_1;\nu_2,s_2;\dots ;\nu_{#1},s_{#1};0)}
\def\iinv#1.{(\nu_1,s_1;\nu_2,s_2;\dots ;\nu_{#1},s_{#1};\infty)}
\def\map#1.#2.{#1 \longrightarrow #2}
\def\rmap#1.#2.{#1 \dasharrow #2}
\def\emb#1.#2.{#1 \hookrightarrow #2}
\def\Supp{\operatorname{Supp}}
\def\dim{\operatorname{dim}}
\def\length{\operatorname{length}}
\def\deg{\operatorname{deg}}
\def\Spec{\operatorname{Spec}}
\def\det{\operatorname{det}}
\def\ker{\operatorname{Ker}}
\def\im{\operatorname{Im}}
\def\e{\Cal E}
\def\e1{E_1}
\def\e2{E_2}
\DeclareMathOperator{\rk}{rk}
\DeclareMathOperator{\Lie}{Lie}
\newcommand{\sF}{\mathcal F}
\newcommand{\sG}{\mathcal G}
\newcommand{\sH}{\mathcal H}
\newcommand{\sK}{\mathcal K}
\newcommand{\sM}{\mathcal M}
\newcommand{\sN}{\mathcal N}
\newcommand{\sO}{\mathcal O}
\newcommand{\sT}{\mathcal T}
\newcommand{\bP}{\mathbb P}
\newcommand{\bQ}{\mathbb Q}
\DeclareMathOperator{\reg}{reg}
\begin{document}
\title{On the characterization of abelian varieties in characteristic $p>0$.}
\author{Christopher Hacon} 
\address{Department of Mathematics \\  
University of Utah\\  
Salt Lake City, UT 84112, USA}
\email{hacon@math.utah.edu}
\author{Zsolt Patakfalvi} 
\address{Mathematics Department\\  
Princeton University\\
Fine Hall, Washington Road \\  
Princeton, NJ 08544-1000, USA}
\email{pzs@math.princeton.edu}
\maketitle
\begin{abstract} We show that if $X$ is a smooth projective variety over an algebraically closed field of characteristic $p>0$ such that $\kappa (X)=0$
and the Albanese morphism is generically finite with degree not divisible by $p$, then $X$ is birational to an abelian variety. We also treat the cases when $a$ is separable (possibly with degree divisible by $p$) and $A$ is either supersingular or ordinary.\end{abstract}

In recent years there have been many interesting and surprisingly precise results  over  fields of characteristic 0 on the birational geometry of varieties of maximal Albanese dimension, i.e., of smooth projective varieties such that the Albanese map  is generically finite. These strong results are typically obtained by combining generic vanishing theorems and the Fourier Mukai functor. Similarly to Kodaira vanishing, it is known that generic vanishing does not directly generalize to characteristic $p>0$ (cf. \cite{HK15}). Never the less, using ideas from F-singularities, it is possible to recover some weak version of generic vanishing in characteristic $p>0$ which implies some interesting results in birational geometry (cf. \cite{HP13,Wang15,WZ14}).
The most basic varieties of maximal Albanese dimension correspond to subvarieties of abelian varieties $X\subset A$. In this case it is known that if $B$ is the connected component through the origin of the set  $\{ a\in A|a+X=X\}$, then $\bar X=X/B$ is a variety of general type
whose Gauss map is finite (\cite{Ueno73} in characteristic 0 and \cite{Abramovich94,Wei11} in char $p>0$; note that in positive characteristics some care is necessary to define Kodaira dimension).
In particular, it follows that if $\kappa (X)=0$, then $X=A$.
By a result of Kawamata and Viehweg, a  similar result is known to hold in characteristic $0$ for varieties of maximal Albanese dimension (cf. \cite{KV80}, \cite{Kawamata81}). 
The purpose of this paper is to generalize this result to positive characteristics. However, contrary to the characteristic zero results, we allow $X$ to be singular. We say for a normal variety $X$ that $\kappa(K_X) =0$ if $h^0(m K_X) = 1$ for every $m$ divisible enough. In general $K_X$ is only a Weil divisor, so we stress that we do not define Kodaira dimension of Weil divisors in general, we only define what it means for the Kodaira dimension to be $0$. We also note that in characteristic 0, the standard notation $\kappa (X)$ denotes the Kodaira dimension of a resolution which is different from $\kappa (K_X)$. Also, see \eqref{def:height} for the definition of a height of an inseparable map. 
\begin{theorem}\label{t1}
Let $X$ be a normal, projective  variety over an algebraically closed field $k$ of characteristic $p>0$. Assume that the Albanese morphism $a:X\to A$ is generically finite, or equivalently that $X$ is of maximal Albanese dimension. If $\kappa (K_X)=0$ and either
\begin{enumerate}
\item $p \nmid \deg a$, or
\item $a$ is separable, and $A$ is either ordinary or supersingular, or
\item $a$ is finite, inseparable of height $1$, and $X$ is smooth, 
\end{enumerate}
then $a$ is birational, and hence $X$ is birational an abelian variety.
\end{theorem}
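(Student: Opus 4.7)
The plan is to adapt the characteristic $0$ argument of Kawamata--Viehweg, substituting the classical generic vanishing theorem by its positive-characteristic weak analogues developed in \cite{HP13,Wang15,WZ14}. A preliminary reduction common to all three cases is to assume $a:X\to A$ is finite and surjective: taking the Stein factorization $X\to X'\to A$, the first arrow is birational because $a$ is generically finite, and since $\kappa(K_X)=0$ is birationally invariant one may replace $X$ by the normalization of $X'$. The image $a(X)\subset A$ inherits $\kappa\le 0$, so the Ueno/Abramovich/Wei structure theorem forces $a(X)$ to be a translate of an abelian subvariety of $A$, and the universal property of the Albanese then gives $a(X)=A$. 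The task is therefore reduced to showing $\deg a=1$.

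Consider $\sF:=a_*\omega_X$ on $A$. Since $\omega_A\cong\OO_A$ and $a$ is finite, there is a canonical trace $\sF\to\OO_A$, and the goal is to show that this trace is an isomorphism, i.e.\ $\sF\cong\OO_A$. In characteristic $0$, $\sF$ is a GV-sheaf by Hacon, and combining $h^0(\omega_X^{\otimes m})=1$ (which is $\kappa(K_X)=0$) with a Chen--Jiang style rank argument forces $\sF\cong\OO_A$. In characteristic $p$ both ingredients require substitutes, and the three hypotheses of the theorem are calibrated precisely so that an appropriate substitute exists in each case.

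In case (1), $p\nmid\deg a$ makes the trace split, so $\sF\cong\OO_A\oplus\sG$ for some $\sG$; applying the weak generic vanishing of \cite{HP13} to $\sG$, together with a Fourier--Mukai rank computation compared against $h^0(\omega_X^{\otimes m})=1$, forces $\sG=0$. In case (2) the trace no longer splits, but separability still provides the Cartier-type operator on $\sF$ used in \cite{HP13}, and the assumption on $A$ governs the supply of admissible twists in $\pico A.$: if $A$ is ordinary then $\pico A.[p^n](k)$ has order $p^{ng}$ and is Zariski dense, so the Hacon--Patakfalvi vanishing can be run through $p^n$-torsion twists; if $A$ is supersingular the Verschiebung on $\pico A.$ is nilpotent and iterating Frobenius pullbacks stabilizes the relevant cohomology, so the conclusion can be extracted after a sufficiently high Frobenius twist and then descended. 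In case (3), the height-one hypothesis gives a concrete Frobenius factorization of $a$ (essentially expressing $a$ as a quotient by a height-one subgroup scheme $G\subset A$, equivalently by a $p$-closed foliation $\Lie(G)\subset T_X$), and the combination of smoothness of $X$ with $\kappa(K_X)=0$ then allows one to analyze the ramification of $a$ directly and force $G$ to be trivial.

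The main obstacle I anticipate is the passage from the weak generic vanishing available in characteristic $p$, which only supplies cohomological vanishing against very special Frobenius-related line bundles, to the sharp identification $\sF\cong\OO_A$. The three case hypotheses each furnish a different device for bridging this gap: a splitting of the trace in (1), a dense supply of $p^n$-torsion twists or nilpotence of Verschiebung in (2), and an honest Frobenius factorization of $a$ in (3). I expect the supersingular subcase of (2) and case (3), where no splitting of the trace is available and one must genuinely work with Frobenius rather than around it, to be the technically hardest part of the argument.
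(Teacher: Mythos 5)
Your overall frame---reduce to showing $\deg a=1$, then analyze $a_*\omega_X$ via the trace map and Fourier--Mukai/Cartier-module generic vanishing---matches the paper's strategy for cases (1) and (2), and your sketch of case (3) (identifying $a$ with the quotient by a height-one group scheme, equivalently a $p$-closed foliation with trivial determinant) is essentially the paper's argument. But there is a genuine gap at the heart of cases (1) and (2): the weak generic vanishing of \cite{HP13} that you invoke only yields the needed structural statement (that the supports of $\mathcal H^0(\Lambda_e)\to\Lambda$ are torsion translates of abelian subvarieties, hence by Lemma \ref{lem:V_0_K_X} consist of $p^e$-torsion points) when the relevant abelian variety has \emph{no supersingular factors}. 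As the paper notes, case (1) for ordinary $A$ is already in \cite{HP13}; the new content of the theorem is precisely the general case, and your proposal supplies no mechanism for it. Your suggestion for the supersingular subcase of (2)---Verschiebung is nilpotent, so iterate Frobenius and descend---does not engage with this obstruction: nilpotence of $V$ on $\hat A$ is exactly what breaks the \cite{HP13} dichotomy, and Frobenius twisting does not restore it.

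The missing idea is Proposition \ref{prop:elliptic_curve} and its use in Proposition \ref{prop:Lambda}: by Oort's theorem, supersingular factors of $A$ are detected by elliptic curves $E\subset A$; for each such $E$ one takes the Stein factorization $X\to Z\to A/E$ and proves, by a subadditivity-of-Kodaira-dimension argument (flattening, de Jong's alterations to a family of stable curves, and Keel's semiampleness of the relative dualizing sheaf---needed because $K_Z$ is not even $\bQ$-Cartier in general), that the general fiber of $X\to Z$ is an elliptic curve isogenous to $E$. Restricting $D\in|K_X+P|$ to such fibers forces $P\in\ker(\hat A\to \hat X_z)$, so $V^0(K_X)$ lies in finitely many torsion translates of $\widehat{A/C}$, where $C$ is generated by all elliptic curves and $A/C$ has no supersingular factors; only then does the \cite{HP13} machinery apply. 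The same proposition drives the paper's actual supersingular case of (2): writing $A$ as isogenous to a product of supersingular elliptic curves, the elliptic fibrations force the ramification divisor $R$ of $a$ to be supported on $\bigcap_j(E_j\times G_j)$, which has no divisorial component, so $R=0$ and one concludes by purity of the branch locus. A smaller point: in case (1) the Fourier--Mukai argument does not give $\sG=0$ but only that $\sG$ is a \emph{nilpotent} Cartier module; one then still needs the comparison of $S^0a_*\omega_X\subseteq\omega_A$ with the generic rank of $a_*\omega_X$ over the locus where $a$ is finite to conclude $\deg a=1$.
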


In fact, in cases $(1)$ and $(2)$, the proof of \eqref{t1} goes through first establishing the following result, where we only assume separability of $a$ without any assumptions on its degree. 

\begin{theorem}\label{t2}
Let $X$ be a normal, projective  variety of maximal Albanese dimension over an algebraically closed field $k$ of characteristic $p>0$. If $\kappa (K_X)=0$ and  $a$ is separable, then $a$ is surjective.
\end{theorem}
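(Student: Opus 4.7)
The plan is to argue by contradiction. Suppose $a$ is not surjective; then $Z := a(X)$ is a proper closed subvariety of $A$ with $\dim Z = \dim X$. A preliminary observation that will be needed at the end: because $a$ is separable and $K_A = 0$, the Riemann--Hurwitz formula on the smooth locus of $X$ (extended across the codimension-two non-smooth locus by normality) gives $K_X \sim R$ for an effective Weil divisor $R$, so $K_X$ is effective and $\kappa(K_X) \geq 0$.

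Next, I would apply the structure theorem for subvarieties of abelian varieties (Ueno in characteristic $0$; Abramovich and Wei in characteristic $p$, as cited in the introduction) to $Z \subset A$. Letting $B \subset A$ be the identity component of the stabilizer $\{\, b \in A : b+Z = Z\,\}$, one obtains an abelian subvariety $B$ such that $\bar Z := Z/B \hookrightarrow A/B$ is of general type. Since the image of an Albanese morphism generates $A$ as a group and $Z \subsetneq A$, the group $B$ must be a proper abelian subvariety of $A$, so $\dim \bar Z \geq 1$. Writing $\pi \colon A \to A/B$, the composition $g := \pi \circ a \colon X \to \bar Z$ is then a surjection whose general fibre $F$ is a generically finite separable cover of a translate of $B$; the same ramification argument applied to $F \to B$ shows $K_F$ is effective, so $\kappa(K_F) \geq 0$.

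The crux of the argument is the subadditivity inequality
\[
\kappa(K_X) \;\geq\; \kappa(K_F) + \dim \bar Z \;\geq\; 0 + 1 \;=\; 1,
\]
which contradicts $\kappa(K_X) = 0$. In characteristic zero this would follow from Kawamata's easy additivity over bases of general type. In positive characteristic, where Iitaka's conjecture is open in general, the plan is to combine bigness of $K_{\bar Z}$ with a suitable weak positivity statement for $g_* \omega_{X/\bar Z}^{\otimes m}$: writing $m K_X = g^*(m K_{\bar Z}) + m K_{X/\bar Z}$ and pushing forward yields $g_* \OO_X(m K_X) = \OO_{\bar Z}(m K_{\bar Z}) \otimes g_* \omega_{X/\bar Z}^{\otimes m}$, and combining bigness of $K_{\bar Z}$ with (weak) positivity of $g_* \omega_{X/\bar Z}^{\otimes m}$ should then force $h^0(X, m K_X) \gtrsim m^{\dim \bar Z}$. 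The main obstacle is securing this weak positivity for a separable but possibly singular fibration in characteristic $p$: this is precisely where the weak generic vanishing and $F$-singularity techniques cited in the introduction (HP13, Wang15, WZ14) must be deployed, and dispensing with any hypothesis on $\deg a$ or on $A$ at this stage is what makes Theorem~\ref{t2} the key stepping-stone toward cases (1) and (2) of Theorem~\ref{t1}.
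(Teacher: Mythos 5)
Your reduction to a statement about the image $Z=a(X)$ and the quotient by the stabilizer $B$ is exactly the paper's first move, but the crux of your argument --- the inequality $\kappa(K_X)\geq \kappa(K_F)+\dim \bar Z$ --- is left as a ``plan'' resting on weak positivity of $g_*\omega_{X/\bar Z}^{\otimes m}$ in characteristic $p$, and this is a genuine gap, not a routine citation. No such positivity statement for an arbitrary separable, possibly non-normal fibration is available in \cite{HP13}, \cite{Wang15} or \cite{WZ14} (those papers prove weak forms of generic vanishing, not Viehweg-type positivity of pushforwards), and subadditivity of the Kodaira dimension in characteristic $p$ is open outside of special situations; the paper itself only runs a subadditivity-type argument in Proposition \ref{prop:elliptic_curve}, where the fibres are curves and it can lean on de Jong's alterations and Keel's semi-ampleness. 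Here $\dim B$ is arbitrary and $\bar Z$ need not be normal, so the step carrying the entire weight of the theorem is missing. (A smaller point: Riemann--Hurwitz against $K_A=0$ does not literally apply to $a$ once you assume $a$ is not surjective; the effectivity of $K_X$ should instead come from the generically surjective map $a^*\Omega_A^n\to\omega_X$.)

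The paper avoids subadditivity altogether by a direct and more elementary construction of two sections of $\omega_X$ itself (so with $m=1$, no powers of $K_X$ needed). With $C=A/B$, $t=\dim C$, $s=\dim B$, $n=\dim X$, the trivial summand $\pi^*\Omega_C^{n-s}\otimes\omega_{A/C}\cong\sO_A^{\oplus\binom{t}{n-s}}$ of $\Omega_A^n$ pulls back under the separable map $a$ to a $\binom{t}{n-s}$-dimensional space of sections of $\omega_X$, and over the regular locus of $Z=\pi(Y)$ the image of this space in $H^0(\omega_X)$ is identified with the pullback of $H^0(\sO_{\bP^N}(1))$ under the Gauss map of $Z$ composed with the Pl\"ucker embedding. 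Weissauer's theorem \cite{Wei11} that the Gauss map of the nondegenerate $Z$ has positive-dimensional image then yields two linearly independent sections of $\omega_X$, contradicting $\kappa(K_X)=0$; this also sidesteps the normality issues with $Z$ that your appeal to ``$\bar Z$ is of general type'' would otherwise have to confront. Replacing your subadditivity step with this construction is the way to repair the argument.
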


\noindent {\bf Outline of the proof.} First, let us comment on the proof of \eqref{t2}. Suppose that $a$ is not surjective and let $Y=a(X)$, then $Y$ has positive Kodaira dimension. By \cite{Wei11} it follows (at least if $Y$ is normal) that $H^0(\omega _Y)>1$ and since $X\to Y$ is separable we obtain $H^0(\omega _X)>1$ which is impossible. If $Y$ is not normal, we use a slightly subtler argument along the same lines. 

Second we explain the main argument of case $(1)$ of \eqref{t1}. Case $(2)$ is an easy variation on this case, and case $(3)$ is based on a fairly direct understanding of the foliation corresponding to $a$. 
Note that if $A$ is ordinary, then case $(1)$ of \eqref{t1} is proven in \cite{HP13}.
To prove it in general by \eqref{t2}, we only have to show  that the degree of $a$ is $1$.

In order to establish that the degree of $a$ is 1, we proceed as follows. If $A'$ is a supersingular factor of $A$, then by a result of Oort, it is known that $A'$ is isogenous to a product of supersingular elliptic curves. By the Poincar\'e reducibility theorem, it then follows that $A$ is fibered by elliptic curves. Let $A\to B=A/E$ be the corresponding fibration and $r\circ q:X\to Z\to B$ the Stein factorization.
If $F$ is the generic fiber of $q$, then in \eqref{prop:elliptic_curve} by a subadditivity of the Kodaira dimension type argument we show that  $F$ is a smooth genus $1$ curve (c.f.  \cite{CZ13}, although since $Z$ is not smooth and $K_Z$ is not even $\bQ$-Cartier in general,  we use an alternative self-contained argument). 
Then, let $$V^0(K_X):=\{P\in \hat A|h^0(K_X+P)\ne 0\}.$$ Since $(K_X+P)|_F=K_F+P|_F\sim P|_F$, it follows that $V^0(K_X)$ is contained in finitely many torsion translates of $\hat B$. Let $C\subset A $ be the abelian subvariety generated by all elliptic curves in $A$, then by the above argument it follows that  $V^0(K_X)$ is contained in finitely many torsion translates of $\widehat {A/C}$. Since $A/C$ has no supersingular factors, we may apply the arguments similar to those of \cite{HP13}. 

%

\noindent {\bf Acknowledgement.} We would like to thank the Mathematisches Forschungsinstitut Oberwolfach where this research was initiated during  Workshop ID: 1512. The first author was partially supported by the NSF research grant no: DMS-1300750, FRG grant no: DMS-1265285 and a grant from the Simons foundation, he would also like to thank the University of Kyoto where part of this work was completed. The second author was partially supported by the NSF research grant no: DMS-1502236.

\section{Proof of the main result}

The base-field $k$ in the article is assumed to be algebraically closed and of characteristic $p>0$. 

\begin{proof}[Proof of Theorem \ref{t2}]



Assume that $a$ is not surjective. By the universal property of the Albanese morphism, then $Y=a(X)$ is not a proper abelian subvariety of $X$. Let $B$ be the biggest abelian subvariety $B \subseteq A$, such that $Y + B = Y$, let $C:= B/A$ with $\pi : A \to C$ being the quotient map, and let $Z:=\pi(Y)$. By our assumption $C \neq 0$. Consider the following commutative diagram.
\begin{equation*}
\xymatrix{
X \ar@/^2pc/[rr]^a \ar@{->>}[dr]_g \ar@{->>}[r]^f & Y \ar[d]^{\tau} \ar@{^(->}[r]^j & A \ar[d]^{\pi} \\
 & Z \ar@{^(->}[r]_{\iota} & C \\
}
\end{equation*}
Note the following:
\begin{enumerate}
 \item \label{itm:direct_sum} $\Omega_A \cong \Omega_{A/C} \oplus \pi^* \Omega_C$,
\item $Y=\pi^{-1}Z $, and hence $Y \to Z$ is smooth with fibers isomorphic to $B$,
\item \label{itm:relative_dualizing} $\omega_{A/C} \cong \sO_A$, because $\omega_A \cong \sO_A$ and $\omega_C \cong \sO_C$. 
\end{enumerate}

Let $U:=Z_{\reg}$ be the regular locus of $Z$ and $V\subset g^{-1}(U) \cap X_{\reg}$ be a dense open subset such that $f|_V$ is \'etale. Such an open set exists by the separability assumption on $a$.  Let $n:= \dim X$, $t:= \dim C$ and $s := \dim B$. By the above fact \eqref{itm:direct_sum}, $\pi^* \Omega_C^{n-s}  \otimes   \omega_{A/C}$ is a direct summand of $\Omega_A^n$. Hence, the map $a^* \Omega_A^n \to \omega_X$, which is extended reflexively from the natural homomorphism given by differentials on $X_{\reg}$, induces 
\begin{equation*}
\sO_X^{\oplus {t \choose n -s} } \cong  a^* \left(\pi^* \Omega_C^{n-s}  \otimes   \omega_{A/C} \right) \cong   \underbrace{ g^* \iota^*  \Omega_C^{n-s}}_{\textrm{fact \eqref{itm:relative_dualizing}}}   \xrightarrow{\xi}   \omega_X \\
.
\end{equation*}
We want to prove that at least two linearly independent  sections on the left are mapped to linearly independent sections.
Then it is enough to prove the above linear independence when restricted to $V$. So, let us examine $\xi|_{V}$.  
It factors as
\begin{equation*}
\xymatrix{
  a|_{V}^* \left(\pi^* \Omega_C^{n-s}  \otimes   \omega_{A/C} \right) \ar@/^15pc/[dd]^{\xi|_{V}} \ar[d]
\cong  f|_{V}^* \left(\tau^* \iota^* \Omega_C^{n-s}  \otimes  j^* \omega_{A/C} \right)  \ar[d]
\cong g|_V^* \iota^* \Omega_C^{n-s}
\\ 
f|_{V}^* \Omega_{\tau^{-1} U}^{n} 
\cong f|_{V}^* \left( \tau^* \Omega_U^{n-s}  \otimes   \omega_{A/C}|_{\tau^{-1} U} \right)   \ar[d] 
\cong g|_V^* \omega_{U}
\\
\Omega_{V}^n \cong \omega_{V}  
},
\end{equation*}
where the bottom vertical arrow is an isomorphism ($f|_V$ is assumed to be \'etale). 
Hence, it is enough to show using the identification $ \Omega_C^{n-s}|_U \cong \sO_{U}^{\oplus {t \choose n-s} }$ that the image of this ${t \choose n-s}$ dimensional space of sections via $ H^0(\Omega_C^{n-s}|_U)  \to  H^0(U, \omega_U)$ has dimension at least $2$. Since $U=Z_{\reg}$, $\Omega_C^{n-s}|_U  \to  \omega_U$ is surjective over $U$, and hence the Gauss map is defined at every point of $U$. Also, $\omega_U$ can be identified with the pullback of $\sO_{\bP^N}(1)$ via the composition of the Gauss map with the Pl\"ucker embedding, where $\bP^N$ is the target of Pl\"ucker embedding.  Furthermore, the image of the ${t \choose n-s}$ dimensional space of sections via  $H^0(\Omega_C^{n-s}|_U)  \to  H^0(U, \omega_U)$ can be identified with the pullback of $H^0(\bP^n, \sO_{\bP^n}(1))$. So, to see that this image has dimension at least $2$, it is enough to show that the image of the Gauss map is positive dimensional. However, this follows from \cite[Theorem 2, 
page 5]{Wei11}, since $Z$ is not degenerate (since we quotiented by $B$ at the beginning of the proof).
\end{proof}

\begin{proposition}
\label{prop:elliptic_curve}
Let $X$ be a normal, projective  variety of maximal Albanese dimension over $k$, such that $\kappa (K_X)=0$ and the Albanese morphism $a:X\to A$ is separable. Let $E\subset A$ be an elliptic curve, let $\pi : A \to B:=A/E$ be the quotient map and let $q:X\to Z$ and $r:Z\to B$ be the Stein factorization of $X \to B$. Then  $q$ is an isotrivial fibration such that the general fiber $F$ of $q$ maps to $E$ via an \'etale map. 
 \end{proposition}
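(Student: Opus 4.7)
The plan follows the outline sketched in the introduction: reduce to showing the general fiber $F$ of $q$ is an elliptic curve étale over $E$, then conclude isotriviality by a moduli-rigidity argument.

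First I would establish the basic geometric picture of the fibration. By Theorem \ref{t2}, $a$ is surjective, so $n := \dim X = \dim A$. The composition $X \to B$ is then surjective, and in its Stein factorization $X \xrightarrow{q} Z \xrightarrow{r} B$ the finite map $r$ has $\dim Z = n-1$, hence the general fibers of $q$ are one-dimensional. Since $a$ is separable, its restriction to a general fiber $F$ is separable; combined with generic smoothness of $q$ this makes $F$ a smooth irreducible curve. The composition $a|_F$ lands in $\pi^{-1}(r(z))$, a translate of $E$, and identifying this translate with $E$ gives a separable surjective map $a_F : F \to E$ (surjective because $a$ is generically finite, so $a(F)$ is a $1$-dimensional closed subvariety of the irreducible curve $E$). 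Finally $g(F) \geq 1$, since $\bP^1$ admits no non-constant map to an abelian variety.

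The crucial step is to show $g(F) = 1$. Suppose for contradiction $g := g(F) \geq 2$. I would derive $\kappa(K_X) \geq 1$, contradicting our hypothesis. The philosophy is subadditivity of Kodaira dimension for a fibration in curves of genus $\geq 2$, as in Chen--Zhang \cite{CZ13}. The standard three ingredients are (i) a nonzero section of $\omega_Z$ coming from the finite map $r : Z \to B$ onto the abelian variety, via the adjunction $r^* \omega_B = \sO_Z \to \omega_Z$; (ii) positivity of $q_* \omega_{X/Z}^{\otimes m}$, whose generic rank is $(2m-1)(g-1) \geq 2m-1$, obtained via a Frobenius-iteration argument along the lines of \cite{HP13}; and (iii) combining (i) and (ii) by the projection formula to produce at least two linearly independent sections of some $\omega_X^{\otimes m}$. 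The obstacle that prevents a direct appeal to \cite{CZ13} is that $Z$ is only normal, so $K_Z$ need not be $\bQ$-Cartier. My adaptation is to run the positivity argument over the smooth locus $U \subseteq Z$, where $q^{-1}(U) \to U$ is a smooth fibration in curves with line-bundle dualizing sheaf, and then extend the resulting sections reflexively from $q^{-1}(U)$ to $X$ (the complement has codimension $\geq 2$ by normality of $X$ and equidimensionality of $q$). This is the main difficulty and where most of the work lies.

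Granted $g(F) = 1$, Riemann--Hurwitz for the separable map $a_F : F \to E$ of degree $d$ reads $0 = 2g - 2 = d(2 \cdot 1 - 2) + \deg R = \deg R$, so $R = 0$ and $a_F$ is étale. For isotriviality, each general fiber is an elliptic curve with an étale isogeny of fixed degree $d$ to $E$, which is determined up to isomorphism by its kernel, a finite subgroup scheme of $E$ of order $d$; as these form a discrete set and $Z$ is connected, the isomorphism class of $F$ over $Z$ is constant, giving isotriviality of $q$.
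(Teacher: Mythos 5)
Your overall architecture matches the paper's: show the general fiber is a genus-one curve \'etale over $E$, rule out $g\geq 2$ by producing extra sections of $mK_X$ from positivity of the relative canonical, then deduce isotriviality by rigidity (your finiteness-of-isogeny-quotients argument and the paper's uncountability argument are interchangeable and both fine). The gap is in the crucial $g\geq 2$ step, in two places. First, your reduction to the smooth locus $U\subseteq Z$ rests on ``equidimensionality of $q$'' to conclude that $X\setminus q^{-1}(U)$ has codimension $\geq 2$. But $q$ need not be equidimensional: $a$ is only generically finite, so it may contract a divisor $D\subset X$, and $q(D)$ can land inside $Z\setminus U$, making $q^{-1}(Z\setminus U)$ contain a divisor. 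Then sections of $mK_{X/Z}$ (or of $mK_{X}$ obtained by twisting with $q^*mK_Z$, which only makes sense where $K_Z$ is Cartier) constructed over $q^{-1}(U)$ do not extend to $X$. This is precisely why the paper's proof begins by passing to a flattenification $q':X'\to Z'$ via Raynaud--Gruson \cite{RG71}: only after forcing equidimensionality is the preimage of the regular locus a big open set, and only then does the chain $h^0(mK_X)\geq h^0(mK_{X'})=h^0(mK_{X_0})$ and the decomposition $K_{X_0}=K_{X_0/Z_0}+q_0^*K_{Z_0}$ go through.

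Second, your ingredient (ii) --- positivity of $q_*\omega_{X/Z}^{\otimes m}$ ``via Frobenius iteration'' --- is the heart of the matter and is left as a black box that does not obviously apply here: even over the regular locus the family is not a smooth family of curves (special fibers may be singular, and the general fiber is a priori only integral with smooth normalization), and no off-the-shelf Frobenius-stable positivity statement covers such pushforwards. The paper instead applies de Jong's alteration theorems \cite{dJ97} twice to produce, after a finite cover $\widetilde Z_0\to Z_0$, a family of stable curves $\overline{X}_0\to\widetilde Z_0$ whose total space is checked (via Fedder's criterion on the deformation of the node) to have canonical singularities; semi-ampleness of $K_{\overline X_0/\widetilde Z_0}$ then comes from Keel \cite{Kee99} together with \cite[2.12]{CZ13}, and the comparison back to $K_{X_0/Z_0}$ uses \cite[Lemma 8.8]{KP15}. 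Your sketch would need to be replaced by an argument of this strength. A smaller point: ``generic smoothness of $q$'' is a characteristic-zero statement; in characteristic $p$ one must argue, as the paper does, that the general fiber is reduced because it is generically \'etale over $E$, and then work with its normalization $\widetilde F$ before invoking Hurwitz.
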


\begin{proof}
Note that $\pi$ is smooth, and hence separable. So the induced map $X\to B$ is separable and therefore so are both $q$ and $r$. {\it First we show that $F$ is an elliptic curve, which maps via an \'etale map onto $E$}.

Since $a$ is separable, there is a dense open set $U \subseteq A$, such that $a|_{a^{-1}U} : a^{-1} U \to U$ is \'etale.  Furthermore, since there is a big open set of $A$ over which $a$ has finite fibers, $X_b \to A_b  \cong E$ is finite for a general closed point $b \in B$. Therefore, for such $b \in B$, $X_b \to A_b \cong E$ is a finite morphism of  Gorenstein projective schemes of pure dimension 1 over $k$, which is smooth over the dense open set $U_b \subseteq A_b  \cong E$. In particular, $X_b$ is reduced and every component of $X_b$ is separable over $A_b  \cong E$. Since general closed fibers of $q$ are  irreducible (e.g., \cite[Lemma 9]{Kol03}) closed $1$-dimensional subschemes of general closed fibers of $X \to B$, general fibers of $q$ are integral and separable over $E$. Let $F$ be such a general fiber and $\widetilde{F}$ its normalization. Since $\widetilde{F} \to E$ is a separable morphism of smooth curves,  $\widetilde{F} \not\cong \mathbb{P}^1$. Furthermore, if $\widetilde{F}$ is an 
elliptic curve, then $\widetilde{F} \to E$ has to be \'etale by the Hurwitz formula, and then $\widetilde{F} \to F$ is the identity. In particular, in this case we proved our goal. Therefore, we may assume that the genus $g$ of $\widetilde{F}$ is at least $2$. 

Our goal is to prove a contradiction by showing that $h^0(m K_X) \geq 2$ for some integer $m>0$. Let $q' : X' \to Z'$ be a flattenification  of $q$, that is, $Z' \to Z$ is a birational morphism of normal varieties, $X'$ is a normal variety mapping birationally onto the main component of $X \times_Z Z'$, and $q'$ is equidimensional \cite[Thm 5.2.2]{RG71}. Let $Z_0 \subseteq Z'$ be the regular locus, $X_0:=q'^{-1} Z_0$ and let $q_0: X_0 \to Z_0$ be the induced morphism. Note that $Z_0$ is a big open set of $Z'$ and hence by the equidimensionality of $q'$, so is $X_0$ in $X'$. In particular, for each integer $m>0$, 
\begin{equation*}
h^0(X,m K_X) 
\geq \underbrace{h^0(X',m K_{X'})}_{\parbox{138pt}{ \tiny $\xi : X' \to X$ is birational, so \qquad \qquad $\xi_* : H^0(X',mK_{X'}) \hookrightarrow H^0(X, mK_X)$}} 
= \underbrace{h^0(X_0,m K_{X_0})}_{\textrm{ $X_0 \subseteq X'$ is a big open set}}. 
\end{equation*}
Hence, to obtain a contradiction with the assumptions $g>1$ and $\kappa(K_X)=0$, it is enough to show that $h^0(X_0, m K_{X_0}) >1$ for some integer $m>0$. Furthermore, note that since $X$ is separable over $B$, so is $Z_0$, and hence $h^0(Z_0, m K_{Z_0}) >0$ for every positive $m>0$. \emph{Hence, in fact it is enough to show that $h^0(X_0, m K_{X_0/Z_0}) >1$ for some integer $m>1$.} The rest of the proof, except the last paragraph about isotriviality, contains a proof of the latter statement. Note that since $Z_0$ is regular, $K_{Z_0}$ is a Cartier divisor, and hence $K_{X_0} = K_{X_0/Z_0} + q_0^* K_{Z_0}$ holds. This is  the main reasons why the step of passing to the flattenification is indispensible for the current line of proof. 

\emph{We claim  that after possibly further shrinking $Z_0$ (but still keeping it big in $Z'$, which is the only fact used about $Z_0$ above and is always assumed in the rest of the proof), there is a diagram 
\begin{equation*}
\xymatrix{
X_0 \ar[d]^{q_0} & \ar[l]^{\delta} \ar[d] X_0^n  &    \widetilde{X}_0 \ar[l]^\rho \ar[r]^{\tau}  \ar[d]^{\widetilde{q}_0} & \overline{X}_0  \ar[d]^{\overline{q}_0} \\
Z_0 & \ar[l] \widetilde{Z}_0  & \widetilde{Z}_0 \ar@{=}[l] \ar@{=}[r] & \widetilde{Z}_0 
}, 
\end{equation*}
where 
\begin{itemize}
\item $q_0$ is a flat map from a normal variety onto a smooth variety  with irreducible general fibers,
\item $ \widetilde{Z}_0$ is smooth, $\widetilde{Z}_0 \to Z_0$ is a finite morphism, and $X^n_0$ is the normalization of $X_0 \times_{Z_0} \widetilde{Z}_0$,
\item $\widetilde X _0$ is smooth, $\rho$ is birational, and $\widetilde{q}_0$ is a flat family of curves with smooth general fibers,
 \item $\overline{X}_0$  has canonical singularities and $\overline{q}_0$ is a family of stable curves of genus $g $.
\end{itemize}}
The above claim can be derived  from \cite{dJ97} as follows: by \cite[Thm 2.4 with condition (vii)(b)]{dJ97}, there is an alteration $q_0^1 : X_0^1 \to Z_0^1$ of $q_0$, which is projective, semi-stable \cite[2.16]{dJ96}, and the general fiber is a smooth genus $g$ curve.  Alteration here means a diagram as follows with the horizontal arrows being alterations.
\begin{equation*}
\xymatrix{
X_0 \ar[d]^{q_0} & X_0^1 \ar[d]^{q^1_0} \ar[l] \\
Z_0   & Z_0^1 \ar[l]
}
\end{equation*}
Next, by \cite[Lem 3.7]{dJ97} there is an alteration $Z_0^2 \to Z_0^1$ and a family $X_0^2 \to Z_0^2$ of stable curves which is isomorphic over a dense open set of $Z_0^2$ to $X_0^1 \times_{Z_0^1} Z_0^2 \to Z_0^2$. Let $W$ be the closure of the graph of this birational isomorphism in $X_0^1 \times_{Z_0^1} X_0^2\cong  (X_0^1 \times_{Z_0^1}{Z_0^2)}\times_{Z_0^2} X_0^2$. Applying \cite[Thm 2.4 with condition (vii)(b)]{dJ97} to $W \to Z_0^2$ yields an alteration $\widetilde{q}_0 : \widetilde{X}_0 \to \widetilde{Z}_0$ of $W \to Z_0^2$, which is a semi-stable family of curves. Furthermore, by construction $\widetilde{X}_0$ maps to $X_0^2 \times_{Z_0^2} \widetilde{Z}_0$ birationally over $\widetilde{Z}_0$. Hence, we may define  $\overline{X}_0 :=X_0^2 \times_{Z_0^2} \widetilde{Z}_0$. The factorization of $\widetilde{X}_0 \to X_0$ through $X_0^n$ is due to the fact that $\widetilde X _0$ is smooth and hence normal. By possibly further restricting $Z_0$ (but 
keeping it big in $Z'$) we may assume that $\widetilde{Z}_0 \to Z_0$ is finite. We are left to show that $\overline{X}_0$ has canonical singularities. So, let us fix a singular closed point $x \in \overline{X}_0$, and let $z$ be its image in $\widetilde{Z}_0$. Then there is necessarily a node at $x$ in the fiber over $z$. In particular, $\Spec \widehat{\sO_{\overline{X}_0,x}} \to \Spec \widehat{\sO_{\widetilde{Z}_0,z}}$ can be pulled back from the universal deformation space of the node: $$\Spec k [[ x,y,t ]] /(xy-t) \to \Spec k [[ t ]].$$ 
In particular, $\widehat{\sO_{\overline{X}_0,x}} \cong k [[ x,y,t_1, \dots, t_n ]]/(xy- g(t_1, \dots, t_n))$ 
for some $g(t) \in k [[ t_1, \dots, t_n ]]$. 
Note that this is a Gorenstein singularity and using Fedder's criterion, it is easy to see that $X$ is strongly $F$-regular at $x$: one just has to verify that there is an integer $e>0$, such that $(xy -g(t))^{p^e-1}$ contains a non-zero monomial, in which the coefficients of each variable $x, y, t_1, \dots, t_n$ is smaller than $p^e -1$, in fact if $at^b$ is a non-zero monomial of $g(t)$, then $-ax^{p^e-2}y^{p^e-2} t^b$ will be a non-zero monomial of $(xy -g(t))^{p^e-1}$, all the coefficients of which are smaller than $p^e-1$ whenever $p^e-1>b$. In particular, $\overline{X}_0$ is Gorenstein klt at $x$, which implies canonical. This finishes the proof of our claim.

Using our claim, let $U^n:= \delta^{-1} \left((X_0)_{\mathrm{reg}} \right)$.  According to \cite[Lemma 8.8]{KP15}, $\left.\delta^* K_{X_0/Z_0}\right|_{U^n} \geq \left.K_{X^n_0/\widetilde{Z}_0}\right|_{U^n}$ (for any compatible choice of relative canonical divisors). However, since $\delta$ is a finite map, $U^n$ is a big open set of $X^n_0$, and hence the restriction to $U^n$ can be dropped: $\delta^* K_{X_0/Z_0} \geq K_{X^n_0/\widetilde{Z}_0} $. \emph{We claim that it is enough to show that $h^0\left(X^n_0, m K_{X^n_0/\widetilde{Z}_0} \right) >1 $ for some integer $m>1$}.  Indeed, if the latter holds, then $\kappa \left( K_{X^n_0/\widetilde{Z}_0} \right) >0$, and then $\kappa(\delta^* K_{X_0/Z_0}) >0$, and eventually $\kappa( K_{X_0/Z_0})>0$ by the invariance of Kodaira dimension under pullbacks via finite maps. 

Since $\rho$ is birational, the above claim yields that it is enough to show that $h^0\left(\widetilde{X}_0, m K_{\widetilde{X}_0/\widetilde{Z}_0}\right) >1 $ for some integer $m>1$. Since $\overline{X}_0$ has canonical singularities and $\tau$ is birational, 
 $h^0\left(\widetilde{X}_0, m K_{\widetilde{X}_0/\widetilde{Z}_0}\right) = h^0\left(\overline{X}_0, m K_{\overline{X}_0/\widetilde{Z}_0}\right)$ for every integer $m>0$. By  \cite[Thm 0.4]{Kee99} and \cite[2.12]{CZ13},  $K_{\overline{X}_0/\widetilde{Z}_0}$ is semi-ample. Since $g>1$, for the general fiber $\overline{F}$ of $\overline{q}_0$,   $\deg (K_{\overline{X}_0/\widetilde{Z}_0}|_{\overline{F}})=2g-2>0$ and so   $\kappa\left(K_{\overline{X}_0/\widetilde{Z}_0}\right) \geq 1$. Hence, we derived a  contradiction from our original assumption $g>1$, which concludes the proof of step $2.a$.

{\it Lastly, we show that the fibration $q$ of  elliptic curves is isotrivial.}
 Assume $q$ is not isotrivial. Then it stays non-isotrivial after passing to an uncountable algebraically closed base-field. The reason, is that $q$ being non isotrivial is equivalent to the corresponding moduli map having positive dimensional image, which is a property invariant under field extension. However, then we have uncountably many non-isomorphic fibers, which are all isogenous to a fixed elliptic curve. This is a contradiction 
\end{proof}

Let us recall the notion of cohomology support loci,  a standard tool of generic vanishing theory. 

\begin{defn}
For a projective variety $X$ with Albanese map $a : X \to A$, we define
\begin{equation*}
V^0(K_X) := \{P \in \hat A | h^0(K_X + a^* P)>0 \} .
\end{equation*}
Similarly, if $\sM$ is a coherent sheaf on an abelian variety $A$, then 
\begin{equation*}
V^0(\sM) := \{P \in \hat A | h^0(\sM \otimes P)>0 \} .
\end{equation*}
Obviously, $V^0(K_X)=V^0(a_* \omega_X)$ using these notations.
\end{defn}

\begin{lemma}
\label{lem:V_0_K_X}
Let $X$ be a normal, projective  variety of maximal Albanese dimension, such that $\kappa (K_X)=0$ and the Albanese morphism $a:X\to A$ is separable. Then the only torsion closed point in $V^0(K_X)$ is $\sO_{\hat A}$.
\end{lemma}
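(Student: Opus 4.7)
The plan is first to establish that $\mathcal O_{\hat A}\in V^0(K_X)$ by producing a canonical nonzero section of $\omega_X$ coming from the ramification of the Albanese, and then to exploit the uniqueness of sections of $\omega_X^{\otimes m}$ coming from $\kappa(K_X) = 0$ to force any other torsion section to cut out the same divisor; the conclusion then follows from injectivity of $a^*$ on $\hat A$. First, I would apply Theorem \ref{t2} to the hypotheses of the lemma (normal projective $X$ of maximal Albanese dimension, $\kappa(K_X)=0$, $a$ separable) to conclude that $a\colon X\to A$ is surjective. Hence $a$ is a separable, surjective, generically finite morphism of varieties of the same dimension. On the smooth locus $X_{\reg}$, the top wedge of the cotangent map supplies a nonzero homomorphism $a^*\omega_A\to \omega_X$. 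Since $\omega_A\cong \mathcal O_A$ (triviality of the canonical bundle of an abelian variety), this is a nonzero map $\mathcal O_{X_{\reg}}\to \omega_{X_{\reg}}$ of line bundles, hence an injection with cokernel supported on the effective ramification divisor $R$, so $\omega_X|_{X_{\reg}}\cong \mathcal O_{X_{\reg}}(R)$. Since $X\setminus X_{\reg}$ has codimension $\geq 2$ in $X$ and $\omega_X$ is reflexive, this extends to $K_X\sim R\geq 0$ as Weil divisors on $X$, and in particular $h^0(\omega_X)\geq 1$, so $\mathcal O_{\hat A}\in V^0(K_X)$.

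Next, I would take any torsion closed point $P\in V^0(K_X)$, say of order $n$, and fix nonzero sections $s_0\in H^0(\omega_X)$ and $s\in H^0(\omega_X\otimes a^*P)$, with divisors $D_0\geq 0$ and $D\geq 0$ respectively. Choose $m$ divisible enough so that $h^0(\omega_X^{\otimes m})=1$ (which holds by $\kappa(K_X)=0$) and with $n\mid m$. Since $a^*(P^m)\cong \mathcal O_X$, both $s^m$ and $s_0^m$ are nonzero sections of $\omega_X^{\otimes m}$ in a one-dimensional space; they are therefore scalar multiples, so $mD=mD_0$ as Weil divisors on $X$, and hence $D=D_0$. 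Combining $D\sim K_X+a^*P$ with $D_0\sim K_X$ gives $a^*P\sim 0$ in $\Pic(X)$.

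To conclude, under the canonical identifications $A=\mathrm{Alb}(X)=(\Pic^0(X)_{\mathrm{red}})^\vee$ and $\hat A=\Pic^0(X)_{\mathrm{red}}$, the pullback $a^*\colon \hat A\to \Pic^0(X)$ coincides with the inclusion of the reduced part of $\Pic^0$, which is injective on closed points. Therefore $a^*P\cong \mathcal O_X$ forces $P=\mathcal O_{\hat A}$, as claimed.

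The main technical subtlety is the production of the ramification section of $\omega_X$ in the normal (not smooth) setting, which requires extending across $X\setminus X_{\reg}$ via reflexivity of $\omega_X$. Conceptually, the crux is upgrading Theorem \ref{t2} from surjectivity of $a$ to effectiveness of $K_X$; once this is in hand, the one-dimensionality of $H^0(mK_X)$ for divisible $m$ together with Albanese injectivity immediately yields the conclusion.
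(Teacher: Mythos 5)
Your proof is correct and follows essentially the same route as the paper's: produce a nonzero section of $\omega_X$ from the differentials of the separable (and, by Theorem \ref{t2}, surjective) Albanese map, then use $h^0(mK_X)\leq 1$ to force the divisors of the two sections to coincide, and conclude from injectivity of $a^*$ on $\hat A$. You are somewhat more explicit than the paper about invoking Theorem \ref{t2} for surjectivity and about why $a^*P\sim 0$ implies $P=\sO_{\hat A}$, but these are points the paper uses implicitly, so the argument is the same.
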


\begin{proof}
By separability, there is an embedding $\omega_A \hookrightarrow a_* \omega_X$ (the differentials give $a^* \omega_A \to \omega_X$,  the pushforward of which composed with the natural embedding $\omega_A \to a_* a^* \omega_A$ is the above map). This shows that $\sO_{\hat A} \in V^0(K_X)$. Now assume that $P \in V^0(K_X)$ for some torsion point $P \neq \sO_{\hat A}$. Then there are $D \in |K_X + P|$ and $E \in |K_X|$. Choose $n$, such that $nP \sim 0$. Then $nD = nE$, since $h^0(nK_X) \leq 1$ by the $\kappa(K_X)=0$ assumption. In particular, $D=E$, which contradicts the assumption $P \neq \sO_{\hat A}$. 
\end{proof}

From here we use the theory developed in \cite{HP13}, in particular the notion of Cartier modules. Recall that a Cartier module is a triple $(\sM,\phi,e)$, where $e >0$ is an integer, $\sM$ is a coherent sheaf on a scheme of positive characteristic, and $\phi$ is an $\sO_X$-linear map $\phi : F^e_* \sM \to \sM$. One can iterate the structure map to obtain $\phi^{e'} : F^{e \cdot e'}_* \sM \to \sM$. It is known that $\im \phi^{e'}$ is the same for all $e' \gg 0$, and this image is called the stable submodule $S^0 \sM$. If $S^0 \sM=0$, then $\sM$ is called nilpotent. For the Cartier modules used in this article $e=1$. For convenience also we usually drop $\phi$ and $e$ from the notation of a Cartier module.

\begin{notation}
\label{notation:Cartier}
For a Cartier module $\sM$ on an abelian variety $A$, we introduce the following notation for any integer $e>0$:
\begin{itemize}
 \item $\sM_e:= F^e_* \sM$
\item $\Lambda _{\sM_e}=R\hat S D_A(\sM _e)$, where $R\hat S$ and  $D_A$ are the Fourier-Mukai and the dualizing functors of $A$, 
\item $\Lambda_{\sM}={\rm hocolim}\Lambda _{\sM_e}$.
\end{itemize}
Note that by \cite[3.1.2]{HP13} $\Lambda_{\sM}$ is in fact, the ordinary direct limit of the sheaves $\sH^0(\Lambda _{\sM_e})$, so one may disregard ${\rm hocolim}$ from the above definition. 

If $\sM = a_* \omega_X$, for the Albanese morphism $a : X \to A$ of a normal, projective variety,  then we use the short hand notation $\Lambda _{e}$ and $\Lambda$ for $\Lambda _{\sM_e}$ and $\Lambda _{\sM}$, respectively. 
\end{notation}

\begin{proposition}
\label{prop:Lambda}
Let $X$ be a normal, projective  variety of maximal Albanese dimension over $k$, such that $\kappa (K_X)=0$ and the Albanese morphism $a:X\to A$ is separable. Then, using notation \eqref{notation:Cartier}, the only point $Q$ where the map of stalks  $\sH^0(\Lambda_e)_Q \to \Lambda_Q$ is possibly not zero are the $p^e$ torsion points of $\hat A$. 
\end{proposition}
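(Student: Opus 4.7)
The plan is to compute the stalks $\sH^0(\Lambda_e)_Q$ in terms of the cohomology support locus $V^0(K_X)$, identify the transition maps of the directed system $\{\sH^0(\Lambda_e)\}$ with Frobenius-trace maps on cohomology, and then apply Lemma \ref{lem:V_0_K_X}. The two main ingredients imported from \cite{HP13} and from the simplification $\omega_A\cong\sO_A$ are: (a) the Fourier--Mukai/Grothendieck duality dictionary on $A$, which becomes particularly clean when the dualizing complex is trivial, and (b) the identification of the structure maps $\Lambda_e\to\Lambda_{e+1}$ of the directed system with the Cartier module structure of $a_*\omega_X$ after applying $R\hat S\circ D_A$.

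First I would unwind the stalk computation. Since $\omega_A\cong\sO_A$, the functor $D_A$ simplifies, and a combination of Mukai inversion and cohomology-and-base-change identifies non-vanishing of $\sH^0(\Lambda_e)_Q$ at a closed point $Q\in\hat A$ with non-vanishing of a cohomology group on $A$. By the projection formula and the fact that the absolute Frobenius acts as multiplication by $p$ on $\Pic^0(A)$, this group is
\begin{equation*}
H^0\bigl(A,F^e_*(a_*\omega_X)\otimes P_Q\bigr)\cong H^0\bigl(X,\omega_X\otimes a^*P_{p^eQ}\bigr),
\end{equation*}
so that $\sH^0(\Lambda_e)_Q\ne 0$ forces $p^eQ\in V^0(K_X)$. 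Next, the Cartier module structure $\phi:F_*(a_*\omega_X)\to a_*\omega_X$ induces $\sM_{e+1}\to\sM_e$, and after applying $R\hat S\circ D_A$ this produces the maps $\Lambda_e\to\Lambda_{e+1}$ of the directed system; at stalks over $Q$ they are compatible with the natural Frobenius-trace maps on the groups displayed above. Hence if $\sH^0(\Lambda_e)_Q\to\Lambda_Q$ is nonzero, some class must survive in $\sH^0(\Lambda_{e+k})_Q$ for every $k\ge 0$, which forces $p^{e+k}Q\in V^0(K_X)$ for all $k\ge 0$.

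Finally, a class at $Q$ that survives in the colimit corresponds, after Fourier--Mukai, to a section lying in the stable submodule associated to $a_*\omega_X$; from the generic vanishing framework of \cite{HP13} the Fourier--Mukai support of this stable part is a finite union of torsion translates of subtori of $\hat A$. In particular $p^eQ$ must be torsion in $\hat A$, and together with $p^eQ\in V^0(K_X)$, Lemma \ref{lem:V_0_K_X} then forces $p^eQ=\sO_{\hat A}$, i.e., $Q\in\hat A[p^e]$. The main obstacle I expect is this last ``torsion-packaging'' step: translating ``a class survives in the colimit'' into ``its Fourier transform is supported on torsion points of $\hat A$'' requires careful bookkeeping with the hocolim description (cf.\ \cite[3.1.2]{HP13}) and with the structural theorems on stable submodules of Cartier modules on abelian varieties. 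Once this reduction is in place, the three steps combine to yield the proposition.
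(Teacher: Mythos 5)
Your first two steps (identifying $\Supp\sH^0(\Lambda_e)$ with $\{Q: p^eQ\in V^0(K_X)\}$ via cohomology and base change and the fact that $F^{e*}$ acts as $p^e$ on $\Pic^0$, and tracking compatibility with the transition maps) are correct and match what the paper does. The genuine gap is exactly the step you flag as ``torsion-packaging,'' but the obstruction there is not bookkeeping with the hocolim: it is that the structural theorem you want to import from \cite{HP13} --- that the support of the image of $\sH^0(\Lambda_e)\to\Lambda$ is a finite union of \emph{torsion} translates of abelian subvarieties --- is simply not available for an arbitrary abelian variety $A$. The arguments of \cite[proof of 3.3.5]{HP13} require (roughly) that the relevant support loci already sit inside finitely many torsion translates of an abelian subvariety of $\hat A$ with no supersingular factors; handling the supersingular part is the whole point of this paper, so citing ``the generic vanishing framework of \cite{HP13}'' here is circular.

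The paper closes this gap with a genuinely geometric input that your proposal never uses: Proposition \ref{prop:elliptic_curve}. Letting $C\subseteq A$ be the abelian subvariety generated by all elliptic curves and $B=A/C$, one shows (via Oort's theorem and Poincar\'e reducibility) that $B$ has no supersingular factors, and then, for each elliptic curve $E\subseteq A$, that the general fiber $X_z$ of the Stein factorization of $X\to A/E$ is an elliptic curve isogenous to $E$; since $(K_X+P)|_{X_z}\sim P|_{X_z}$ must be effective of degree $0$, any $P\in V^0(K_X)$ dies on $\hat X_z$, whence $V^0(K_X)$ lies in finitely many torsion translates of $\hat B$. Only after this reduction can one invoke \cite[proof of 3.3.5]{HP13} to get that the components of the locus where $\sH^0(\Lambda_e)_Q\to\Lambda_Q$ is nonzero are torsion translates of abelian subvarieties of $\hat B$. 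Note also a smaller slip at the end of your argument: from ``the support is a union of torsion translates of subtori'' you cannot directly conclude that $p^eQ$ is torsion, since a non-torsion point can lie on a positive-dimensional torsion translate; one must first argue that the components are zero-dimensional, which the paper does by observing that torsion points are dense in any positive-dimensional torsion translate while Lemma \ref{lem:V_0_K_X} plus base change permits only the finitely many $p^e$-torsion points.
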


\begin{proof}

Let $C\subset A$ be the abelian subvariety generated by all elliptic curves $E\subset A$ and $\pi : A\to B=A/C$ the induced morphism. \emph{We claim that  $B$ has no supersingular factors and  $V^0(K_X)$ is contained in finitely many torsion translates of $\hat B\subset \hat A$.}

Indeed, by a result of Oort \cite[Theorem 4.2]{Oort74}, $A$ is supersingular if and only if it is isogenous to a product of supersingular elliptic curves. If $B$ has a supersingular factor then by Poincar\'e reducibility, $B$ is isogenous to a product of an elliptic curve and an abelian variety. But then $B$ contains an elliptic curve say $E\subset B$. By Poincar\'e reducibility, then $A$ contains an elliptic curve $E'$ with $\pi _*E'\ne 0$. This contradicts the definition of $C$.

Suppose now that $D\in |K_X+P|$. For any elliptic curve $E\subset A$, we consider the Stein factorization $X\to Z\to A/E$. According to \eqref{prop:elliptic_curve}, for general $z \in Z$, $X_z\to E$ is an isogeny and so $X_z $ is an elliptic curve.
It follows that $0\leq D|_{X_z}\sim (K_X+P)|_{X_z} =P|_{X_z}$ implies that $P\in {\rm Ker}(\hat A\to \hat X_z)$. Since $X_z\to E$ is 
an isogeny and the union of these elliptic curves generates $C$, it follows that $P$ belongs to finitely many torsion translates of $\hat B ={\rm Ker}(\hat A\to \hat C)$. This concludes our claim.

Now, we conclude the proof itself. Since $V^0(K_X)$ is contained in finitely many torsion translates of an abelian subvariety $\hat B \subset \hat A$ such that $\hat B$ has no supersingular factors, \cite[proof of 3.3.5]{HP13} implies the following:  for each integer $ e \geq 0$, each maximal dimensional irreducible component of the set of points $Q\in \hat A$ such that the image of $\mathcal H^0(\Lambda _e)_Q \to \Lambda _Q $ is non-zero, is a torsion translate of an abelian subvariety of $\hat B$. Since the only torsion points these loci contain are the $p^e$-torsion points according to \eqref{lem:V_0_K_X} and cohomology and base-change (cf. \cite[3.2.1]{HP13}), the above locus is contained in the set of $p^e$-torsion points.

\end{proof}

\begin{lemma}
\label{lem:Cartier_kernel}
If $N \to K$ is morphism of Cartier modules, then $M:= \ker(N \to K)$ is also a Cartier module. 
\end{lemma}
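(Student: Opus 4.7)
The plan is to define the structure map $\phi_M : F^e_* M \to M$ by restriction of $\phi_N$, and verify that it lands in the kernel by a short diagram chase. Let $\phi_N : F^e_* N \to N$ and $\phi_K : F^e_* K \to K$ denote the Cartier structure maps, and let $f : N \to K$ denote the given morphism. By definition of a morphism of Cartier modules, $f$ commutes with the structure maps, i.e.\ $f \circ \phi_N = \phi_K \circ F^e_* f$; in particular both $N$ and $K$ are Cartier modules with the same exponent $e$, which will automatically also be the exponent for $M$.

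First, since the absolute Frobenius is an affine morphism, the functor $F^e_*$ is exact, so applying it to the left-exact sequence $0 \to M \to N \xrightarrow{f} K$ gives a left-exact sequence
\begin{equation*}
0 \longrightarrow F^e_* M \xrightarrow{F^e_* \iota} F^e_* N \xrightarrow{F^e_* f} F^e_* K,
\end{equation*}
where $\iota : M \hookrightarrow N$ denotes the inclusion. Consider the composition $\phi_N \circ F^e_* \iota : F^e_* M \to N$. Postcomposing with $f$ and using the compatibility of $f$ with the Cartier structures, one computes
\begin{equation*}
f \circ \phi_N \circ F^e_* \iota \;=\; \phi_K \circ F^e_* f \circ F^e_* \iota \;=\; \phi_K \circ F^e_*(f \circ \iota) \;=\; 0,
\end{equation*}
since $f \circ \iota = 0$ by construction of $M$ as the kernel. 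By the universal property of the kernel, $\phi_N \circ F^e_* \iota$ factors uniquely through $\iota$, giving an $\sO_X$-linear map $\phi_M : F^e_* M \to M$ with $\iota \circ \phi_M = \phi_N \circ F^e_* \iota$.

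This endows $M$ with the structure of a Cartier module with exponent $e$, and the inclusion $\iota : M \hookrightarrow N$ is tautologically a morphism of Cartier modules. There is essentially no obstacle here: everything follows from exactness of $F^e_*$ and the universal property of the kernel, so the lemma reduces to a one-square diagram chase.
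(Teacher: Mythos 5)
Your proof is correct and is essentially the argument in the paper: the paper performs the same diagram chase element-wise (an element of $F^e_*M$ dies in $F^e_*K$, hence in $K$, so its image under $\phi_N$ lands in $M$), while you phrase it via exactness of $F^e_*$ and the universal property of the kernel. The only cosmetic difference is that the paper first passes to powers of the structure maps to arrange a common exponent $e$ for $N$ and $K$, which you absorb into the definition of a morphism of Cartier modules.
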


\begin{proof}
Let $e>0$ be an integer such that some powers of the structure homomorphism of $N$ and $K$ appear as $F^e_* N \to N$ and $F^e_* K \to K$. Consider then the following commutative diagram.
\begin{equation*}
\xymatrix{
0 \ar[r] & F^e_* M \ar[r] \ar@{-->}[d] & F^e_* N \ar[r] \ar[d] & F^e_* K \ar[d] \\
0 \ar[r] &  M \ar[r] & N \ar[r] &  K \\
}
\end{equation*}
We see that the two solid vertical arrows induce the dashed vertical arrow by diagram chasing: if $m \in F^e_* M$, then  its image in $F_*^e K$ is zero, and hence so is it in $K$, however then its image in $N$ lands in the submodule $M \subseteq N$. 
\end{proof}

\begin{proof}[Proof of case $(1)$ of \eqref{t1}]
By the separability assumption, there is an embedding $\omega_A \to a_* \omega_X$, and by the degree assumption the trace map $a_* \omega_X \to \omega_A$ splits this, after a multiplication of the latter by a non-zero constant. The trace map $a_* \omega_X \to \omega_A$ is compatible with the Frobenius traces, by the compatibility of the absolute Frobenii maps with $a$ and by the functoriality of traces. The map $\omega_A \to a_* \omega_X$ is also compatible with the Frobenius traces in the adequate sense according to \cite[Step 1 of the proof of Thm 4.3.1]{HP13}. Therefore, using \eqref{lem:Cartier_kernel}, the above splitting is a splitting of Cartier modules, say $a_* \omega_X \cong \omega_A \oplus \sM$. According to \eqref{lem:V_0_K_X}, every point of $V^0(\sM)$ is non-torsion.  Then, by cohomology and base-change (cf. \cite[3.2.1]{HP13}), $\sH^0(\Lambda_{\sM_e})$ is supported on non-torsion points. Combining this with \eqref{prop:Lambda}, we obtain that the map $\sH^0(\Lambda_{\sM_e}) \to \Lambda_{\sM}$ 
is the zero map. It follows then 
that $\Lambda_{\sM}=0$. This implies that $\sM$ is nilpotent according to \cite[3.1.1 and 3.2.4]{HP13}. But then $\varprojlim F^e_* a_*\omega _X\cong \varprojlim F^e_*\omega _A$. Since the image of $\varprojlim F^e_* a_*\omega _X\to a_*\omega _X$ is   the stable submodule $S^0 a_* \omega_X \subseteq a_* \omega_X$ it follows that $S^0 a_* \omega_X$ is contained in $\omega_A \subseteq a_* \omega_X$. On the other hand, $S^0 a_* \omega_X|_U = a_* \omega_X|_U$  over any open set $U \subseteq A$ over which $X$ is regular and $a$ is finite.  Hence, the rank of $a_* \omega_X$ is $1$, and consequently $\deg a = 1$.
\end{proof}

Next we prepare the proof of case $(2)$ of \eqref{t1}. We use the Verschiebung morphism  $V$ of $A$, which is the dual of the (relative) Frobenius morphism, and $V$ is \'etale if and only if $A$ is ordinary \cite[2.3.1,2.3.2]{HP13}. In particular, the Cartier module structure of a Cartier module $\sM$ translates to a ``dual Cartier module'' structure $R \hat S D_A(\sM) \to V^* R \hat S D_A(\sM)$ on $R \hat S D_A(\sM)$. The understanding of this ``dual Cartier module'' structure is the key of our approach. We note that strictly speaking $V$ is a morphism $A \to A'$, where $A'$ is the twist of $A$, that is, the $k$ structure is adequately composed with a Frobenius of $k$. Below, we will regularly consider $V$ as a morphism $A \to A$ by twisting back the $k$ structure on the target. This way $V$ is not $k$-linear anymore, but it becomes an endomorphism. 

\begin{lemma}
\label{lem:quotient}
Let $A$ be an ordinary abelian variety. Let $\sF$ be a coherent sheaf supported at $0 \in \hat A$, endowed with a map $\sF \to V^* \sF$, which is an isomorphism at $0$. Then there is a quotient sheaf $\sG \cong k(0)$ of $\sF$ with a map $\sG \to V^* \sG$, such that  $\sG \to V^* \sG$ is an isomorphism at 0, and the following diagram is commutative
\begin{equation*}
\xymatrix{
\sF \ar[r] \ar@{->>}[d] & V^* \sF \ar@{->>}[d] \\
\sG \ar[r] & V^* \sG
}
\end{equation*}

\end{lemma}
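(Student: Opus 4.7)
The plan is to reduce the claim to a piece of semi-linear algebra at the stalk at $0$, and then appeal to the Lang-Steinberg theorem. Set $R := \widehat{\sO}_{\hat A, 0}$, let $\mathfrak m \subset R$ be the maximal ideal with residue field $k$, and write $M := \sF_0$; since $\sF$ is supported at $0$, $M$ is a finite length $R$-module. Since $A$ is ordinary, $V$ is \'etale and fixes $0$, so $\sigma := V^{\#} : R \to R$ is an automorphism with $\sigma(\mathfrak m) = \mathfrak m$, and on $k$ it acts by a power of the absolute Frobenius. The morphism $\sF \to V^* \sF$ then translates into a $\sigma$-semi-linear endomorphism $\varphi : M \to M$, and the hypothesis that it is an isomorphism at $0$ becomes the condition that the induced map $\bar\varphi : M/\mathfrak m M \to M/\mathfrak m M$ is bijective.

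At this point $M/\mathfrak m M$ is a finite-dimensional $k$-vector space carrying a Frobenius-semi-linear automorphism $\bar\varphi$. By the Lang-Steinberg theorem applied to $\operatorname{GL}_n$ over the algebraically closed field $k$, such a space admits a basis of $\bar\varphi$-fixed vectors, and in particular it contains $\bar\varphi$-stable hyperplanes. Pick one, say $H \subset M/\mathfrak m M$, and let $K \subseteq M$ be its preimage under the projection $M \twoheadrightarrow M/\mathfrak m M$; then $K \supseteq \mathfrak m M$ is an $R$-submodule of $M$ with $M/K \cong k$. The key verification is that $K$ is $\varphi$-stable: since $\sigma(\mathfrak m) \subseteq \mathfrak m$ we have $\varphi(\mathfrak m M) \subseteq \mathfrak m M$, so $\varphi$ descends to $\bar\varphi$ on $M/\mathfrak m M$, whereupon $\bar\varphi(H) \subseteq H$ forces $\varphi(K) \subseteq K$.

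Let $\sG$ be the skyscraper sheaf at $0$ with stalk $M/K \cong k$, so $\sG \cong k(0)$, and consider the surjection $\sF \twoheadrightarrow \sG$. Its composition with $\sF \to V^* \sF$ kills $K$ by the $\varphi$-stability just checked, and so factors through a morphism $\sG \to V^* \sG$ completing the commutative square in the statement. This induced arrow is the reduction of the bijection $\bar\varphi$ to the one-dimensional quotient $(M/\mathfrak m M)/H$, hence is nonzero, and therefore an isomorphism of stalks at $0$, as required.

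The substantive ingredient is the Lang-Steinberg theorem, which is what guarantees the existence of a $\bar\varphi$-stable hyperplane in $M/\mathfrak m M$; this is also the step that really uses the ordinary hypothesis on $A$, since without it $V$ would fail to be \'etale, $\sigma$ would not be an automorphism of $R$, and the entire semi-linear framework would collapse. Everything else is mechanical: the translation between sheaves on $\hat A$ supported at $0$ and finite length $R$-modules, the identification of $V^* \sF$ at the stalk, and the commutativity of the final square are routine.
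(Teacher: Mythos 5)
Your proof is correct and follows essentially the same route as the paper: both reduce to the fiber/stalk at $0$, observe that the hypothesis gives a bijective Frobenius-semilinear map on a finite-dimensional $k$-vector space, and use algebraic closedness of $k$ (via the existence of a basis of fixed vectors, which is the Lang--Steinberg statement you cite) to produce a stable one-dimensional quotient on which the map remains an isomorphism. The paper simply passes to the fiber $\sF\otimes k(0)$ directly rather than taking the preimage of a stable hyperplane in the stalk, but since $\mathfrak m M$ is killed either way the resulting quotient $\sG\cong k(0)$ is the same.
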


\begin{proof}
 Denote by $\xi$ the map $\sF \to V^* \sF$. Then 
\begin{equation*}
\xi \otimes k(0)  : \sF \otimes k(0) \to (V^* \sF) \otimes k(0) \cong (V \otimes k(0))^* ( \sF \otimes k(0)) \cong  \sF \otimes k(0) 
\end{equation*}
 can be identified with a $p$-linear automorphism  of the finite dimensional vector space $\sF \otimes k(0)$ over $k$, where $V \otimes k(0)$ is the morphism $\Spec k(0) \to \Spec k(0)$ induced by $V$. Here we are using that $V$ is \'etale. However, since $k$ is assumed to be algebraically closed,  $\sF \otimes k(0)$ admits a one-dimensional quotient $W$, to which $\xi \otimes k(0)$ descends as an isomorphism. Let $\iota: 0 \hookrightarrow \hat A$ be the inclusion of the origin. Then the composition
\begin{equation*}
\sF \twoheadrightarrow \iota_* (\sF \otimes k(0)) \twoheadrightarrow \iota_* W =: \sG
\end{equation*}
yields a quotient as desired. 
\end{proof}

\begin{lemma}
\label{lem:nice_Cartier_module}
 Let $A$ be an ordinary abelian variety. Let $\sF$ be a coherent sheaf supported at $0 \in  \hat A$, endowed with a map $\sF \to V^* \sF$, which is an isomorphism at $0$. Then the induced Cartier module on $[-1_A]^*D_ARS(\sF)[-g]$ is an iterated extension of the Cartier module $\omega_A$ up to allowing twistings the structure map by non-zero elements of $k$ in the $\omega_A$ factors. 
\end{lemma}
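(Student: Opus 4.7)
The plan is to induct on the length $\ell(\sF)$ of the zero-dimensional sheaf $\sF$, using Lemma~\ref{lem:quotient} for the inductive step and a direct Fourier--Mukai computation for the base case.

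For the base case $\ell(\sF) = 1$, we have $\sF \cong k(0)$ and the map $\xi: \sF \to V^* \sF$ restricts at the stalk at $0$ to multiplication by some $\lambda \in k^{\times}$ (after identifying $(V^*\sF)_0 \cong \sF_0$ via the \'etale cover $V$, exactly as in the proof of Lemma~\ref{lem:quotient}). A direct Mukai computation gives $RS(k(0_{\hat A})) \cong \sO_A$ (since $\sP$ is trivial over $A \times \{0_{\hat A}\}$), so that $[-1_A]^* D_A RS(\sF)[-g] \cong [-1_A]^* \omega_A = \omega_A$. Under the Fourier--Mukai correspondence between coherent sheaves on $\hat A$ equipped with $V^*$-structures and Cartier modules on $A$, the scalar $\lambda$ translates into the standard inverse Cartier operator on $\omega_A$ rescaled by $\lambda$.

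For the inductive step with $\ell(\sF) > 1$, Lemma~\ref{lem:quotient} yields a short exact sequence $0 \to \sF' \to \sF \to \sG \to 0$ with $\sG \cong k(0)$, together with a commutative square relating $\xi$ on $\sF$ to a compatible $V^*$-structure on $\sG$. Since $V$ is \'etale by ordinarity of $A$, $V^*$ is exact; a standard diagram chase then produces $\xi': \sF' \to V^* \sF'$, and the five lemma applied to stalks at $0$ shows $\xi'$ is an isomorphism there. Because each of $\sF, \sF', \sG$ is zero-dimensional, $RS$ sends each to a coherent sheaf (locally free on its support) concentrated in a single cohomological degree, so applying $[-1_A]^* D_A (-)[-g]$ produces, with arrows reversed by contravariance of $D_A$, a short exact sequence of Cartier modules
\[
0 \to [-1_A]^* D_A RS(\sG)[-g] \to [-1_A]^* D_A RS(\sF)[-g] \to [-1_A]^* D_A RS(\sF')[-g] \to 0.
\]
By the base case, the first term is $\omega_A$ with a scalar-twisted Cartier structure; by induction, the third term is an iterated extension of such; hence so is the middle, finishing the inductive step.

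The principal technical obstacle is to verify carefully that the functor $[-1_A]^* D_A RS(-)[-g]$ converts the ``$V^*$-structure'' $\sF \to V^* \sF$ into a genuine Cartier module structure $F_* \sM \to \sM$ in a functorial, exact-sequence-preserving way. This should follow from the Cartier-dual relationship between the Verschiebung $V$ and the relative Frobenius $F_{A/k}$, combined with the standard compatibility of Fourier--Mukai with isogenies and their duals, as set up in \cite[\S2.3]{HP13}. A secondary subtlety is to ensure that in the base case one genuinely obtains the standard Cartier operator on $\omega_A$ twisted by a \emph{nonzero} scalar rather than zero, but this is guaranteed by the hypothesis that $\xi$ is an isomorphism at $0$.
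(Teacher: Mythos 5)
Your proposal is correct and follows essentially the same route as the paper: induction on length, with the base case $RS(k(0))\cong\sO_A$ giving $\omega_A$ with a scalar-twisted Cartier structure, and the inductive step applying $[-1_A]^*D_ARS(-)[-g]$ to the short exact sequence $0\to\sH\to\sF\to\sG\to 0$ supplied by Lemma~\ref{lem:quotient} (your $\sF'$ is the paper's $\sH$). The technical points you flag (exactness of $V^*$ by ordinarity, the diagram chase giving the induced structure on the kernel, and the arrow reversal under $D_A$) are exactly the ones implicit in the paper's argument.
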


\begin{proof}
Just for the course of the proof, let us call  Cartier modules with the above structure (i.e., iterated extension of \dots) \emph{``nice''} Cartier modules. 

We show the statement by induction on $\length \sF$. If $\length \sF=1$, then $\sF \to V^* \sF$ can be identified with $R \hat S (D_A(\omega_A)) \to R \hat S (D_A(F_* \omega_A))$ up to a multiplication by a non-zero element of $k$. 

If $\length \sF>1$, then we work by induction. Let $\sG$ be the quotient of \eqref{lem:quotient}, and let $\sH$ be the kernel of $\sF \to \sG$. Then we have an induced commutative diagram with exact rows:
\begin{equation*}
\xymatrix{
0 \ar[r] & \sH \ar[d] \ar[r] & \sF \ar[d] \ar[r] & \sG \ar[d] \ar[r] & 0 \\
0 \ar[r] & V^* \sH \ar[r] & V^* \sF \ar[r] & V^* \sG \ar[r] & 0
}
\end{equation*}
Applying $[-1_A]^*D_ARS(\_)$ we obtain the following exact sequence of Cartier modules.
\begin{equation*}
\xymatrix{
0 \ar[r] &[-1_A]^*D_ARS(\sG) \ar[r] & [-1_A]^*D_ARS(\sF) \ar[r] &  [-1_A]^*D_ARS(\sH) \ar[r] & 0
} 
\end{equation*}
The Cartier modules $[-1_A]^*D_ARS(\sG)$ and $[-1_A]^*D_ARS(\sH)$ are ``nice'' by induction. Hence so is $[-1_A]^*D_ARS(\sF)$. 
\end{proof}

\begin{lemma}
\label{lem:sub_Cartier_module_not_nilpotent}
Let $X$ be a smooth variety, and let $\sM$ be a sub-Cartier module of a Cartier module $\sN$, where $\sN$ is the iterated extension of the Cartier module $\omega_X$ with itself, allowing the structure map of $\omega_X$ in these pieces to be multiplied by a non-zero element of $k$. If $\sM \neq 0$, then $\sM$ is not nilpotent. 
\end{lemma}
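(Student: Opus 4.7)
The plan is to induct on the length $n$ of a filtration $0 = \sN_0 \subset \sN_1 \subset \cdots \subset \sN_n = \sN$ that realizes $\sN$ as an iterated extension of (twisted) copies of $\omega_X$. At each step I will either squeeze $\sM$ into a shorter filtration piece, or project $\sM$ onto a non-zero sub-Cartier module of a single copy of $\omega_X$, where I can appeal to the base case.

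For the base case $n=1$, we have $\sN = \omega_X$ with Cartier structure $c \cdot C$ for some $c \in k^{\times}$ and $C$ the classical Cartier operator. A non-zero coherent sub-Cartier module $\sM \subseteq \omega_X$ has full generic rank (since $\omega_X$ is invertible), so there is a dense open $U \subseteq X$ on which the inclusion $\sM|_U \hookrightarrow \omega_X|_U$ is an equality. Because $X$ is smooth, the classical Cartier operator $C$ is surjective, and twisting by the non-zero scalar $c$ affects neither surjectivity nor nilpotence (scaling $\phi$ by $c$ scales $\phi^e$ by $c^{1+p+\cdots+p^{e-1}} \neq 0$). Hence $S^0(\sM|_U) = \omega_U \neq 0$. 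Since the formation of $S^0$ commutes with restriction to open subsets on a Noetherian scheme (the decreasing chain of images $\im \phi^e$ stabilizes globally, and restriction to opens is exact), we conclude $(S^0 \sM)|_U = \omega_U$, so $S^0 \sM \neq 0$ and $\sM$ is not nilpotent.

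For $n > 1$, set $\sM' := \sM \cap \sN_{n-1}$, which is a sub-Cartier module by \eqref{lem:Cartier_kernel}. If $\sM \subseteq \sN_{n-1}$, then $\sM$ is a non-zero sub-Cartier module of the length-$(n-1)$ extension $\sN_{n-1}$ and we are done by induction. Otherwise the quotient $\sM/\sM'$ is a non-zero sub-Cartier module of $\sN/\sN_{n-1} \cong \omega_X$ (with some twist), which is non-nilpotent by the base case. For a Cartier surjection $\pi : \sM \twoheadrightarrow \sM/\sM'$, nilpotence of $\sM$ forces nilpotence of $\sM/\sM'$: if $\phi^e_\sM = 0$, then $\phi^e_{\sM/\sM'} \circ F^e_* \pi = \pi \circ \phi^e_\sM = 0$, and surjectivity of $F^e_* \pi$ forces $\phi^e_{\sM/\sM'} = 0$. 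Contrapositively, $\sM$ is not nilpotent.

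The only delicate step is the base case; its core input is that the Cartier operator on a smooth variety is surjective, combined with the compatibility of the stable image $S^0$ with restriction to open subsets. Both facts are standard, so I do not anticipate a serious obstacle — the inductive mechanism is a bookkeeping matter given the kernel/quotient functoriality already recorded in \eqref{lem:Cartier_kernel}.
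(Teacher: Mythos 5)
Your proof is correct and follows the same overall strategy as the paper's: induction on the length of the filtration exhibiting $\sN$ as an iterated extension, with the base case resting on the fact that a non-zero subsheaf of $\omega_X$ is generically all of $\omega_X$ and the (scalar-twisted) Cartier operator is surjective on a smooth variety. The one genuine difference is which end of the filtration you peel off in the inductive step. The paper uses the bottom graded piece, i.e.\ the exact sequence $0 \to \omega_X \to \sN \to \sK \to 0$: if $\sM \cap \omega_X = 0$ it embeds $\sM$ into $\sK$ and inducts, and otherwise it shrinks to an open set where $\sM \supseteq \omega_X$ and produces non-vanishing images $\im\phi^e$ by lifting sections through a local splitting of $\omega_X \subseteq \sN$. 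You instead peel off the top graded piece $\sN/\sN_{n-1} \cong \omega_X$ and, when $\sM \not\subseteq \sN_{n-1}$, use the purely formal fact that a quotient of a nilpotent Cartier module is nilpotent (via surjectivity of $F^e_*\pi$). Your variant is arguably a little cleaner, as it replaces the generic restriction and the local-splitting argument of the paper's second case with a one-line diagram chase; both routes reduce to the identical base case and prove the same statement. (A cosmetic remark: the exponent $c^{1+p+\cdots+p^{e-1}}$ you assign to the scalar picked up by $\phi^e$ depends on the convention for the $\sO_X$-structure on $F^e_*\sM$, but all that is needed is that the scalar is non-zero, which holds in any case since $k$ is a field.)
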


\begin{proof}
We show the statement by induction on $\rk \sN$. If $\rk \sN=1$, then $\omega_X \cong \sN$. By torsion-freeness of $\omega_X$, $\sM$ is generically equal to $\omega_X \cong \sN$, and then the statement follows by the smoothness of $X$. 

Hence, we may assume that $\rk \sN>1$. By the assumption, we have an exact sequence of Cartier modules as follows.
\begin{equation*}
\xymatrix{
0 \ar[r] & \omega_X \ar[r] & \sN \ar[r] & \sK \ar[r] & 0
}
\end{equation*}
If $\sM \cap \omega_X = 0$, then $\sM$ embeds into $\sK$, and then we are done by induction. On the other hand, if $\sM \cap \omega_X \neq 0$, then first by restricting to a non-empty open set, we may assume that $\sM \supseteq \omega_X$. Second, 
since $\omega_X$ is locally split, we may lift every local section of $ \omega_X \subseteq \sM$ via this local splitting to local sections of $F^e_* \omega_X \subseteq F^e_* \sM$. This concludes our proof. 
\end{proof}

\begin{proof}[Proof of case $(2)$ of \eqref{t1}]
\emph{First, we treat the case when $A$ is ordinary}. For each integer $e \geq 0$, set 
\begin{equation*}
\Lambda _e':={\rm Im}\left(\mathcal H ^0(\Lambda _e)\to \Lambda \right)\in {\rm QCoh} (\hat A ), 
\end{equation*}
 which is a coherent sheaf.
In particular $\Lambda_e'\to \Lambda_{e'}'$ is an injective homomorphism of Artinian coherent sheaves for any $0\leq e\leq e'$.  According to \eqref{prop:Lambda},  $\Lambda '_e$ is supported  at the $p^e$ torsion points of $\hat A$. Then applying $RS$ and $D_A$ to the induced homomorphism (in $D(\hat A)$) $\Lambda _e\to \Lambda '_e$, we obtain natural maps $$V_e:=[-1_A]^*D_ARS(\Lambda _e')[-g]\to F_*^e a_*\omega _X.$$
Since $V _e=F^e_*V _0$ and the induced maps $V _e\to V _{e-1}$ are compatible with the trace maps $F_*^ea_*\omega _X\to F_*^{e-1}a_*\omega _X$, it follows that the above homomorphisms $\psi _e : V_e\to F^e_*a_*\omega _X$ induce a homomorphism of Cartier modules. 

We claim that \emph{$\Lambda_0' \to V^* \Lambda_0' = \Lambda_{1}'$ is an isomorphism} at 0, which implies that the Cartier module $V_1=F_* V_0 \to V_0$ has the structure described in \eqref{lem:nice_Cartier_module}.  Indeed, by the ordinarity assumption $V$ is \'etale. Hence, $\length \left(V^* \Lambda_0'\right)_0 = \length \left(\Lambda_0'\right)_0$, where the outside $0$-subindex means taking stalk at $0 \in \hat A$. However, as noted above, we know that $\Lambda_0' \to V^* \Lambda_0'$ is injective, and therefore so is $\left(\Lambda_0'\right)_0 \to \left(V^* \Lambda_0'\right)_0$. This implies our claim using the above equality of lengths.

According to \eqref{lem:Cartier_kernel}, $\mathcal M:= \ker \psi_0$ inherits a Cartier module structure. 
Since $\mathcal H ^0(\Lambda _e)\to \Lambda '_e$  is surjective, by cohomology and base change $H^0( F_*^e a_*\omega _X\otimes P)^\vee \to H^0(V_e\otimes P)^\vee $ is surjective for all $P\in \hat A$ (cf. \cite[3.2.1]{HP13}) and hence $H^0(V_e\otimes P)\to H^0( F_*^e a_*\omega _X\otimes P)$ is injective. In particular, it follows that $V^0(\mathcal M)=\emptyset$.
By \cite[3.2.1]{HP13} it follows that $\Lambda _{\mathcal M}=0$, where $\Lambda _{\mathcal M}$ is as defined in \eqref{notation:Cartier}. In particular, $\sM$ is a nilpotent Cartier module. 
However, using the ordinarity assumption, by \eqref{lem:sub_Cartier_module_not_nilpotent}, then $\sM=0$ follows. Therefore, $\psi_e$ is an injection, and then $V_e$ can be regarded as a submodule of $F^e_* a_* \omega_X$. Using that according to \cite[3.1.4]{HP13} $\varprojlim V_e = \varprojlim F^e_* a_* \omega_X$, and that the image of the latter in $a_* \omega_X$ is $S^0 a_* \omega_X$, we obtain that $V_0 = S^0 a_* \omega_X$ (we are also using that $V_e \to V_{e-1}$ is surjective). Furthermore, 
since $S^0a_*\omega _X$ is  isomorphic to $a_*\omega _X$ in codimension $1$, it follows that $c_1(a_*\omega _X)=0$.
Then, since $a$ is separable, it follows that $X\to A$ is \'etale over a big open subset of $A$.  By the purity of the branch locus theorem \cite[Expos\'e X, Theorem 3.1]{SGA1}, the Stein factorization of $X\to A$ is \'etale. Since $a$ is the Albanese morphism, $a$ is birational.

\emph{Suppose now that $A$ is supersingular}, so that by a result of Oort, $A$ is isogenous to a product of supersingular elliptic curves (see \cite{Oort74}). In particular, for some integer $n >0$ and each $1 \leq j \leq \dim X$, we have a commutative diagram as follows, where the horizontal arrows are isogenies.
\begin{equation*}
\xymatrix{
\times_i E_i \ar[r]^f \ar@/^1.5pc/[rr]^{[n]} & A  \ar[d]^{\pi_j} \ar[r]^{\eta} \ar[dr]^{\xi_j} & \times_i E_i \ar[d] \\
 & B_j:=A/f(E_j) \ar[r] & \times_{j \neq i} E_i =:C_j
}
\end{equation*}
 Also, we may write $K_X = a^* K_A + R = R$ for some effective divisor $R$ on $X$. According to \eqref{prop:elliptic_curve}, there is an effective divisor $D_j$ on $B_j$, such that $\Supp a_* R \subseteq \Supp \pi_j^* D_j$.  However, then since the map $B_j \to C_j$ is finite, there is also an effective divisor $G_j$ in $C_j$, such that $\Supp a_* R \subseteq \Supp \xi_j^* G_j = \Supp \eta^* (E_j \times G_j)$. In particular, 
\begin{equation}
\label{eq:ramficiation}
\Supp \eta_* a_* R \subseteq \bigcap_j \Supp ( E_j \times G_j). 
\end{equation}
 Note that $\bigcap_j \Supp( E_j \times G_j)$ cannot have a divisorial component. Indeed, if it had one, then it would be of the form $E_j \times H_j$ for every $j$, where $H_j$ is a prime divisor of $C_j$. However this is impossible. In particular, $\eta_* a_* R=0$, and then using that $\eta$ is finite, $a_* R =0$. Hence, $a$ is \'etale over a big open set of $A$. From here we conclude as in the last sentence of the ordinary case. 

\end{proof}

\begin{defn}
\label{def:height}
Let $f : X \to Y$ be a finite, purely inseparable map  between integral varieties over $k$. Then the height of $f$ is the smallest integer $e>0$, such that $K(X) \subseteq K(Y)^{1/p^e}$. Note that by the purely inseparable assumption, there is such an integer.
\end{defn}

\begin{proof}[Proof of case $(3)$ of \eqref{t1}]
The assumptions are equivalent to the factorization
\begin{equation*}
\xymatrix{
A' \ar[r] \ar@/^1.0pc/[rr]^F & X \ar[r]_a & A.
}
\end{equation*}
Here by $F$ we in fact mean the relative Frobenius over $k$ and by putting prime on the space we denote the appropriate pullback via the Frobenius of $k$ that makes it the source of the relative Frobenius morphism. 

According to \cite[p 105]{Eke88}, $X=(X/\sF)'$ for some $1$-foliation $\sF \subseteq \sT_A \cong \sO_A \otimes_k \Lie(A)$ (we refer to \cite[p 96-]{Mum70} on Lie algebras, and note that $\sF$ is a saturated and hence reflexive subsheaf of  $\sT_A$).  
Further, by \cite[Prop 1.1.(iii)]{Eke88}, $\omega_X^p \cong a^* \left(\omega_A \otimes (\det \sF)^{1-p} \right)$. Hence $\kappa(\det \sF ^\vee ) =0$. However, 
\begin{equation*}
\det \sF \subseteq \wedge^{\rk \sF} \sT_A \cong \sO_A \otimes_k \wedge^{\rk \sF} \Lie(A). 
\end{equation*}
Hence $\det \sF \cong \sO_A(-D)$ for some effective divisor $D$. However, $\kappa(\sO_A(D))=0$ can hold only if $D=0$. Hence, $\det \sF \cong \sO_A$. 

\emph{We claim that if $\sF \subseteq \sO_A^{\oplus j}$ is a reflexive coherent subsheaf with $\det \sF =0$, then $\sF \cong \sO_A^{\oplus i}$ for some integer $i \geq 0$.} Indeed, if there is a factor $\sO_A$ of $\sO_A^{\oplus j}$, such that $\sF \cap \sO_A =0$, then $\sF$ embeds also into $\sO_A^{\oplus (j-1)}$. Hence, we may assume that $\sF \cap \sO_A \neq 0$ for all the $j$ factors. However, by the reflexivity of $\sF$ (i.e. all sections extend from big open sets) in this case $\sF \cap \sO_A = \sO_A(-D)$ for any fixed factor, where $D$ is an effective divisor. Let $\sO_A^{\oplus (j-1)}$ be the other factors. Then we have an exact sequence
\begin{equation*}
\xymatrix{
0 \ar[r] & \sO_A(-D) \cong \sF \cap \sO_A \ar[r] & \sF \ar[r] & \frac{\sF}{\sF \cap \sO_A} \subseteq \sO^{\oplus (j-1)} \ar[r] & 0
}
\end{equation*}
This shows that $\det \sF = \sO_A(-D) \otimes \sO_A(-E)$ for another effective divisor $E$. Hence, since $\det \sF=0$, $D=0$ must hold. Therefore, $\sF \cap \sO_A = \sO_A$ for all factors, which implies that $\sF = \sO_A^{\oplus j}$. This concludes our claim.

From our claim we see that the inclusion $\sF \hookrightarrow \sT_A$ of foliations is given by the inclusion of a finite dimensional Lie algebra $\mathfrak{f} \hookrightarrow \Lie(A)$, where $\sF \cong \sO_A \otimes_k \mathfrak{f}$. Let $G \subseteq A$ be the kernel of the Frobenius morphism. Then, $\Lie(G) = \Lie(A)$. In particular,  $\mathfrak{f} \subseteq \Lie (G)$ induces a unique subgroup scheme $H \subseteq G$ according to \cite[Thm, p 139]{Mum70}, such that $A/H=A/\sF$. In particular, then $X$ is a abelian variety. 
\end{proof}

\end{document}